\documentclass[12pt]{amsart}
\usepackage{amssymb}

\usepackage{hyperref}

\setcounter{secnumdepth}{2}
\setcounter{tocdepth}{2}

\numberwithin{equation}{section}

\setlength{\parindent}{.15in}
\setlength{\textwidth}{6in}
\setlength{\oddsidemargin}{.05in}
\setlength{\evensidemargin}{.45in}
\setlength{\parskip}{\smallskipamount}

\newtheorem{theorem}[equation]{Theorem}
\newtheorem{lemma}[equation]{Lemma}

\hyphenation{Lip-schitz}
\newcommand{\abs}[1]{\lvert#1\rvert} 
\newcommand{\Ad}{\operatorname{Ad}} 
\newcommand{\ad}{\operatorname{ad}} 
\newcommand{\eps}{\epsilon}
\newcommand{\cgp}{G} 
\newcommand{\step}{s} 
\newcommand{\calg}{\mathfrak{g}} 
\newcommand{\calglayer}[1]{V_{#1}} 
\newcommand{\dil}[1]{\delta_{#1}} 
\newcommand{\brkt}[2]{[#1,#2]} 
\newcommand{\conjg}[1]{\operatorname{C}_{#1}} 
\newcommand{\lquot}{G/H} 
\newcommand{\expconjg}[2]{\conjg{\exp\left(#1\right)}\left(\exp(#2)\right)} 
\newcommand{\perturbvec}{Y} 
\newcommand{\verterr}{h} 
\newcommand{\errvec}{Z} 
\newcommand{\errveccomp}{W} 

\newcommand{\lowestordercoeffcondition}{-(2-D)<0}
\newcommand{\conjgdot}{\cdot} 
\newcommand{\defeq}{=} 
\newcommand{\subriemannian}{subriemannian } 

\begin{document}
\title{Non-minimality of corners in \MakeUppercase{\subriemannian}geometry}

\author[Eero Hakavuori]{Eero Hakavuori} 
\email{eero.j.hakavuori@jyu.fi}

\author[Enrico Le Donne]{Enrico Le Donne}
\email{enrico.ledonne@jyu.fi}

\address[Hakavuori and Le Donne]{
Department of Mathematics and Statistics,  
University of Jyv\"askyl\"a, 40014 Jyv\"askyl\"a, Finland}

\keywords{Corner-type singularities, geodesics, sub-Riemannian geometry, Carnot groups, regularity of length minimizers}

\renewcommand{\subjclassname}{%
\textup{2010} Mathematics Subject Classification}
\subjclass[]{ 
53C17, 
49K21,  
28A75.  
}

\date{March 8, 2016}

\begin{abstract}
We give a short solution to one of the main open problems in \subriemannian geometry.
Namely, we prove that length minimizers do not have corner-type singularities.
With this result we solve Problem II of Agrachev's list, and provide the first general result toward the 30-year-old open problem of regularity of \subriemannian geodesics.
\end{abstract}

\maketitle
\tableofcontents

\section{Introduction}
One of the major open problems in \subriemannian geometry is the regularity of length-minimizing curves (see \cite[Section 10.1]{Montgomery} and 
\cite[Section 4]{Monti_survey}). This problem has been open since the work of Strichartz \cite{Strichartz, Strichartz2} and Hamenst{\"a}dt \cite{Hamenstadt}.

Contrary to Riemannian geometry, where it is well known that all length minimizers are $C^\infty$-smooth, the problem in the \subriemannian case is significantly more difficult. The primary reason for this difficulty is the existence of abnormal curves (see \cite{Agrachev_Sachkov,Agrachev_Barilari_Boscain:book}), which we know may be length minimizers since the work of Montgomery \cite{Montgomery1994}. 
Nowadays, many more abnormal length minimizers are known \cite{BryantHsu, Liu-Sussman2, Liu-Sussman, Gole_Karidi, Sussmann_cornucopia}.

Abnormal curves, when parametrized by arc-length, need only have Lipschitz-regularity (see \cite[Section 5]{LLMV2}), which is why, a priori, no further regularity can be assumed from an arbitrary length minimizer in a \subriemannian space.
However,  a recent result of Sussmann states that in the analytic setting, every length minimizer  is analytic on an open dense subset of its domain, see \cite{Sussmann:regularity}.
Nonetheless, even including all known abnormal minimizers, no example of a non-smooth length minimizer has yet been shown.

A considerable effort has been made to find examples of non-smooth minimizers (or to prove the non-existence thereof) in the simple case of curves where the lack of continuity of the derivative is at a single point. Partial results for the non-minimality of corners can be found, e.g., in \cite{Monti-nonminimizing, Monti_CV, LLMV}.

In this paper, we prove the non-minimality of curves with a corner-type singularity in complete generality. 
Thus we solve Problem II of Agrachev's list of open problems in \subriemannian geometry \cite{Agrachev_problems}, by proving the following result (definitions are recalled in Section~\ref{sec:def}):

\begin{theorem}\label{thm:mflds}
	Length-minimizing curves in \subriemannian manifolds do not have corner-type singularities.
\end{theorem}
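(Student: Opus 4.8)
The plan is to reduce the statement, by a blow-up, to a non-minimality estimate inside a Carnot group, and then to defeat the corner with a competitor that cuts it by a gently curving horizontal arc. First I would pass to the tangent cone. If a length minimizer $\gamma$ had a corner at a point $p$, I would rescale it by the \subriemannian dilations $\dil{s}$ centered at $p$ and let $s\to 0$. The rescaled curves subconverge to a curve in the nilpotent approximation of the manifold at $p$, a Carnot group $\cgp$ with stratified Lie algebra $\calg=\calglayer{1}\oplus\dots\oplus\calglayer{\step}$; since the dilations are homotheties of the distance and length is lower semicontinuous under this convergence, the limit is again a length minimizer. The corner survives the blow-up: the limit curve is the concatenation of the two horizontal rays whose directions are the (distinct) one-sided tangents of $\gamma$ at $p$. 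Hence it suffices to prove that in a Carnot group no corner is a geodesic.

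Placing the vertex at the identity, the corner is the concatenation of $t\mapsto\exp(tX)$ and $t\mapsto\exp(tY)$ for distinct unit vectors $X,Y\in\calglayer{1}$, so by left-invariance of $d$ the claim becomes the strict inequality $d(e,\exp(X)\exp(Y))<\abs{X}+\abs{Y}$; equivalently, $\exp(X)$ must fail to lie on a geodesic from $e$ to $g\defeq\exp(X)\exp(Y)$. I would build a horizontal competitor from $e$ to $g$ of length below $\abs{X}+\abs{Y}=2$ as follows: follow the incoming ray, round off the turn through a smooth arc in the plane spanned by $X$ and $Y$, rejoin the outgoing ray, and cancel the displacement produced by the turn in the higher layers $\calglayer{i}$ ($i\ge 2$) by inserting short conjugated segments $\conjg{\exp(v)}(\exp(w))=\exp(v)\exp(w)\exp(-v)$, which generate controlled corrections in those layers. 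Its length I would estimate through the Baker--Campbell--Hausdorff formula, weighing the straightened horizontal cost $\abs{X+Y}=2\cos(\theta/2)<2$ (where $\theta\in(0,\pi]$ is the angle between $X$ and $Y$) against the cost of correcting the commutator error $\exp(-(X+Y))\,g=\exp(\tfrac12\brkt{X}{Y}+\cdots)$, which lives in $\calglayer{2}\oplus\dots\oplus\calglayer{\step}$.

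The hard part is exactly this last comparison, and it is where the problem has resisted for decades. The obstruction is a self-similarity: because $\dil{s}$ is a group automorphism, $\exp(-sX)^{-1}\exp(sY)=\dil{s}(g)$, so $d(\exp(-sX),\exp(sY))=s\,d(e,g)$ for every $s$, and any competitor obtained by cutting the corner locally and splicing in a geodesic is therefore circular --- it merely reproduces the same corner one scale down. The vertical correction, moreover, cannot be treated as negligible: a displacement of size $\theta$ in $\calglayer{2}$ has, by homogeneity, distance of order $\sqrt{\theta}$, which for a small turning angle dwarfs the gain $2-\abs{X+Y}\sim\theta^2/4$ coming from straightening, so the naive \emph{go straight, then correct} competitor actually loses. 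The way out is to realize the correction at second order, distributing the conjugated maneuvers over a macroscopic portion of the curve so that their total cost becomes comparable to, and then strictly smaller than, the gain; the genuinely delicate point is to carry out this accounting uniformly across all layers $\calglayer{2},\dots,\calglayer{\step}$, so that the estimate holds for Carnot groups of arbitrary step, which is what delivers the theorem in full generality.
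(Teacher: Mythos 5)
Your reduction to Carnot groups by blow-up is essentially the paper's first step (the paper additionally inserts a desingularization to handle non-equiregular manifolds before blowing up, and reduces further to rank-2 subgroups, but these are routine refinements of what you describe). The genuine gap is in the second half: you correctly diagnose the central obstruction --- that a vertical displacement of size $\theta$ in $\calglayer{2}$ costs of order $\sqrt{\theta}$, swamping the $O(\theta^2)$ gain from straightening --- but you do not actually overcome it. Saying the correction must be ``realized at second order, distributed over a macroscopic portion of the curve'' names the desideratum without supplying a mechanism, and a single smooth rounding arc with BCH bookkeeping across all layers at once is exactly the kind of competitor that the scaling obstruction defeats. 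This accounting is the entire content of the problem, and your proposal leaves it open.

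The paper's resolution, which is absent from your proposal, rests on two ideas working together. First, an induction on the step $\step$: one quotients $\cgp$ by the central subgroup $\exp(\calglayer{\step})$, applies the inductive hypothesis in the step-$(\step-1)$ quotient to get a shortcut saving $(2-D)\eps$ with $D<2$, and lifts it back; the lift's endpoint error then lies \emph{only} in the top layer $\calglayer{\step}$, not spread across $\calglayer{2}\oplus\dots\oplus\calglayer{\step}$ as in your competitor. Second, the key quantitative lemma: an error $\dil{\eps}(\verterr)\in\exp(\calglayer{\step})$, which scales as $\eps^{\step}$, can be produced by three segments $\conjg{\exp(X_1)}(\exp(\eps^{\step}\perturbvec_i))$ with $\perturbvec_i\in\calglayer{\step-1}$, conjugated by \emph{unit-scale} points already on the corner, because $\brkt{X}{\eps^{\step}\perturbvec}=\eps^{\step}\brkt{X}{\perturbvec}$ reaches $\calglayer{\step}$ at full order $\eps^{\step}$ while the inserted segments themselves cost only $d(e,\exp(\eps^{\step}\perturbvec_i))=\eps^{\step/(\step-1)}d(e,\exp(\perturbvec_i))=o(\eps)$. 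It is this exponent $\step/(\step-1)>1$ --- correction one layer down from the error, leveraged by commutation against the macroscopic arms of the corner --- that beats the $\sqrt{\theta}$ obstruction you identified; without it, or some equivalent device, the proof does not close.
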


In fact, our proof also shows that the same result holds even if instead of \subriemannian manifolds, we consider the slightly more general setting of Carnot-Carath\'eodory spaces with strictly convex norms.

\subsection{The idea of the argument}
The argument builds on ideas of the two papers \cite{Leonardi-Monti, LLMV_corner}. 
Up to a desingularization, blow-up, and reduction argument, it is sufficient to consider the case of a corner in a Carnot group of rank 2.
For Carnot groups, we prove the result (Theorem~\ref{thm:Carnot}) by induction on the step $s$ of the group, starting with $s=2$, i.e., the Heisenberg group, see Lemma~\ref{lem:starting:induction}.

For an arbitrary step $s\geq 3$ we project the corner into a Carnot group of step $s-1$. The inductive argument then gives us the existence of a shorter curve in the group of step $s-1$.
Lifting this curve back to the original group, we get a curve shorter than the initial corner, but with an error in the endpoint by an element of degree $s$, see Lemma~\ref{lem:lift:induction}.

We correct the error by a system of curves placed along the corner. 
In fact, we prove that this is possible with a system of three curves with endpoints in the subspace of degree $s-1$.
This last fact is the core of the argument (see Lemma~\ref{lem:technic}) and is a crucial consequence of the fact that the space is a nilpotent and stratified group.
 
Finally, we consider the situation at smaller scales by modifying the initial corner using an $\eps$-dilation of the lifted curve and suitable dilations of the three correcting curves.
By Lemma~\ref{lem:technic} the suitable factor to correct the error of the dilated corner-cut is $\epsilon^{s/(s-1)}$, essentially due to the fact that the error scales with order $s$ and the correction scales with order $s-1$.
Hence, the length of the new curve is the length of the corner plus a term of the form
\[-a \eps + b \epsilon^{s/(s-1)},\]
for some positive constants $a, b$. We conclude that for $\eps$ small enough the new curve is shorter than the corner.

\subsection{Definitions}\label{sec:def}
Let $M$ be a smooth Riemannian manifold and $\Delta$ a smooth subbundle of the tangent bundle. We consider the length functional $L_\Delta$ on curves in $M$ that for a curve $\gamma$ is defined as the Riemannian length of $\gamma$ if $\dot\gamma \in\Delta $ almost everywhere, and $\infty$ otherwise.
Analogously to the Riemannian setting, let $d_\Delta$ be the distance associated to $L_\Delta$. 
We assume that $\Delta$ is bracket generating, in which case $d_\Delta$ is finite and its length functional is $L_\Delta$. In this paper, we call $(M,d_\Delta)$ a {\em \subriemannian manifold}. For more on the subject see \cite{Gromov1, Gromov, Montgomery, jeancontrol, Rifford:book, Agrachev_Barilari_Boscain:book}.
If instead of a Riemannian structure, we use a continuously varying norm on the tangent bundle, we call the resulting metric space a {\em Carnot-Carath\'eodory space} ({\em C-C space}, for short).

Let $\gamma:[-1,1]\to M$ be an absolutely continuous curve on a manifold $M$.
We say that $\gamma$ has a {\em corner-type singularity} at time 0, if the left and right derivatives at $0$ exist and are linearly independent. 
 
Let $G$ be a Lie group. We say that a curve $\gamma:[-1,1]\to G $ is a {\em corner} if there exist linearly independent vectors $X_1, X_2$ in the Lie algebra of $G$ such that 
\[
\gamma(t) = \begin{cases}
\exp(-t X_1) & \text{if }t\in [-1,0]\\ 
\exp(t X_2) & \text{if }t\in (0,1]\,.
\end{cases}
\] 
In such a case, we will say that $\gamma$ is the corner from $\exp( X_1)$ to $\exp( X_2) $. Notice that at 0 the left derivative of $\gamma$ is $-X_1$, while the right derivative is $X_2$. Hence, a corner has a corner-type singularity at 0.

Let $G$ be a simply connected Lie group with a Lie algebra $\calg$ admitting a stratification, i.e., $\calg = \calglayer{1}\oplus\dots\oplus\calglayer{s}$, where $\calglayer{j}\subset\calg$ are disjoint vector subspaces of the algebra, such that $\calglayer{j+1} = \brkt{\calglayer{1}}{\calglayer{j}}$ for all $j=1,\dots,s$ with $\calglayer{s+1}= \{0\}$. 
The subspaces $\calglayer{j}$ are called the \emph{layers} of the stratification. Let $\abs{\cdot}$ be a norm on the first layer $\calglayer{1}$ of the Lie algebra. The Lie group $G$ together with a stratification $\calg = \calglayer{1}\oplus\dots\oplus\calglayer{s}$ of its algebra and a norm $\abs{\cdot}$ on the first layer $\calglayer{1}$ is called a \emph{Carnot group}. See \cite{Montgomery, LeDonne:Carnot} for more discussion on Carnot groups.

A Carnot group has a natural structure of C-C space where the subbundle $\Delta$ is the left-translation of the first layer $\calglayer{1}$ and the norm is extended left-invariantly.
Then by construction the C-C distance $d=d_\Delta$ on a Carnot group is left-invariant. 
In addition, a Carnot group also has a family of Lie group automorphisms $\{\dil{\epsilon}\}_{\epsilon>0}$ adapted to the stratification. Namely, each $\dil{\epsilon}$ is 
determined by $(\dil{\epsilon})_* (X)= \eps^j X$, for $X\in V_j$. 
Moreover, each map $\dil{\epsilon}$ behaves as an $\eps$-dilation for the C-C distance, i.e., $d(\dil{\epsilon}(g),\dil{\epsilon}(h)) = \epsilon d(g,h)$, for all points $g,h\in G$.

In a Carnot group, the curves $t\mapsto \exp(tX)$, with $X\in \calg$, have locally finite length if and only if $X\in \calglayer{1}$. 
Actually, such curves are length minimizing and $d(e,\exp(X))=\abs{X}$, where $e$ denotes the identity element of $G$.

A norm $\abs{\cdot}$ is {\em strictly convex} if in its unit sphere there are no non-trivial segments. Equivalently, if $\abs{x}=\abs{y}=1$ and $\abs{x+y}=2$, strict convexity implies $x=y$. 

\section{Preliminary lemmas}
The following lemma is the base of our inductive argument. In particular, it proves Theorem~\ref{thm:mflds} for the Heisenberg group equipped with a strictly convex norm.
\begin{lemma}
\label{lem:starting:induction}
Let $G$ be a step-2 Carnot group with a distance $d$ associated to a strictly convex norm.
Then in $(G,d)$ no corner is length minimizing.
\end{lemma}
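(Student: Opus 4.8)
The plan is to prove the statement concretely in the Heisenberg group first, since every step-2 Carnot group with a strictly convex norm contains the relevant obstruction on a two-dimensional horizontal subspace. Given a corner from $\exp(X_1)$ to $\exp(X_2)$ with $X_1,X_2\in\calglayer{1}$ linearly independent, the goal is to produce a strictly shorter horizontal curve with the same endpoints. The key structural fact in step $2$ is that the bracket $\brkt{\calglayer{1}}{\calglayer{1}}=\calglayer{2}$ is \emph{central}, so the endpoint of any horizontal curve is determined by its first-layer ``linear part'' (the net horizontal displacement) together with a second-layer term that behaves like a signed area enclosed by the projection of the path to $\calglayer{1}$. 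Concretely, I would use the Baker--Campbell--Hausdorff formula, which in step $2$ truncates to $\exp(A)\exp(B)=\exp\left(A+B+\tfrac12\brkt{A}{B}\right)$, to compute the endpoint of a concatenation of horizontal segments.

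The heart of the argument is a competitor construction. First I would replace the corner by a ``straight shortcut'' in the first layer: the horizontal segment $t\mapsto\exp(t(X_2-X_1))$ realizes the net first-layer displacement, and by strict convexity of the norm its length $\abs{X_2-X_1}$ is \emph{strictly} less than $\abs{X_1}+\abs{X_2}$, the length of the corner. The obstruction is that this shortcut has the wrong endpoint: it misses by a central element, namely a nonzero multiple of $\brkt{X_1}{X_2}\in\calglayer{2}$ coming from the area term in the BCH formula. So the plan is to correct this central error by appending (or splicing in) a small horizontal loop that encloses exactly the needed signed area while costing only a controlled amount of extra length. Since $\Delta$ is bracket generating and $\brkt{X_1}{X_2}$ spans the (one-dimensional, in Heisenberg) center, such correcting loops exist; I would make this quantitative by a scaling/dilation argument using $\dil{\eps}$.

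The decisive estimate is how length trades against enclosed area under rescaling. Using the dilations $\dil{\eps}$, a loop enclosing area $\eps^2 a$ has length of order $\eps$; equivalently, to correct a second-layer error of fixed size one can use a loop whose length is $O(\sqrt{\text{area}})$, and crucially one can place a small loop of length $O(\eps)$ that contributes a central correction of order $\eps^2$. Balancing this against the \emph{linear} gain from the strict-convexity shortcut is exactly the mechanism sketched in the introduction, producing a net length of the form $-a\eps+b\eps^2$ for positive constants $a,b$; for $\eps$ small the $-a\eps$ term dominates and the competitor is strictly shorter than the corner. I expect the main obstacle to be precisely this balancing: one must verify that the area-correcting loops can be inserted at scale $\eps$ so that the first-layer gain is genuinely linear in $\eps$ (this is where strict convexity is essential, ruling out the degenerate case $X_2=-X_1$ where the shortcut collapses) while the central error to be corrected is only quadratic in $\eps$, so that the correction is cheap enough not to overwhelm the gain.

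A cleaner route that avoids loop bookkeeping is to work at the level of a single rescaled corner: dilate the whole corner by $\eps$, so the endpoint error lives at order $\eps^2$, then replace the dilated corner by the straight shortcut plus one short correcting arc, and finally concatenate with the untouched remainder of the original corner. In either formulation the essential input is the same: the step-$2$ (central) structure forces the endpoint error into $\calglayer{2}$, where it scales quadratically, while strict convexity guarantees a strictly positive linear-order gain from shortcutting the corner. I would therefore organize the proof around (i) the BCH endpoint computation isolating the central error, (ii) the strict-convexity inequality $\abs{X_1+X_2}<\abs{X_1}+\abs{X_2}$ (using linear independence, hence $X_2\neq -X_1$), and (iii) the scaling estimate that makes the correction asymptotically negligible.
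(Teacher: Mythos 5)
Your overall strategy (cut the corner at scale $\eps$, gain $(2-D)\eps$ from strict convexity with $D=\abs{X_2-X_1}<2$, then repair the resulting central error of order $\eps^2$) is the right one and matches the paper's, but the correction step as you describe it does not close. You propose to fix the central error $\tfrac{\eps^2}{2}\brkt{X_1}{X_2}\in\calglayer{2}$ by appending a horizontal loop enclosing the required signed area. By the very scaling you state (loop length of order $\sqrt{\text{area}}$, since area scales like the square of length under $\dil{\eps}$), a free-standing loop enclosing an area of order $\eps^2$ has length of order $\eps$, not $\eps^2$; so this competitor has length $2-(2-D)\eps+b\eps$ rather than $2-(2-D)\eps+b\eps^2$, and there is no reason for $b<2-D$ (in the standard Heisenberg group with $X_1,X_2$ orthonormal one can check that the cost of such a loop exceeds the gain). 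Your claimed net length $-a\eps+b\eps^2$ therefore does not follow from the construction you give; the two sentences ``a loop of length $O(\eps)$ contributes a central correction of order $\eps^2$'' and ``the net length is $-a\eps+b\eps^2$'' are inconsistent with each other.

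The missing idea --- and the device the paper actually uses --- is to generate the central correction not by a new loop but by conjugating a segment that is already part of the corner. Concretely, the paper replaces the tail $\exp(\tfrac{1}{2}X_2)$ of the corner by $\exp(-\eps^2X_1)\exp(\tfrac{1}{2}X_2)\exp(\eps^2X_1)=\exp(\tfrac{1}{2}X_2-\tfrac{\eps^2}{2}\brkt{X_1}{X_2})$: geometrically a long thin parallelogram whose long sides are traversed anyway, so the marginal length cost is only $2\eps^2$ while the central displacement produced is still of order $\eps^2$. This is exactly what makes the correction term genuinely $O(\eps^2)$ and the balance $2-(2-D)\eps+2\eps^2<2$ work for small $\eps$ (it is also the step-2 instance of the conjugation mechanism of Lemma~\ref{lem:technic}). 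The rest of your outline --- the BCH endpoint computation in step 2, strict convexity giving $\abs{X_2-X_1}<2$ from linear independence, and the dilation scaling --- agrees with the paper.
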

\begin{proof}
Let $X_1$ and $X_2$ be linearly independent vectors of the first layer $\calglayer{1}$ of $G$.
For $\eps>0$, consider the group elements
\begin{align*}
	g_1 &\defeq \exp ( (\eps-1) X_1), &
	g_2 &\defeq \exp ( \eps ( X_2-X_1)), &
	g_3 &\defeq \exp ( (\tfrac{1}{2}-\eps ) X_2),\\
	g_4 &\defeq\exp( -\eps^2 X_1), & 
	g_5 &\defeq\exp( \tfrac{1}{2} X_2), &
	g_6 &\defeq\exp( \eps^2 X_1).
\end{align*}
Using the Baker-Campbell-Hausdorff Formula, which in step 2 is $\exp(X)\exp(Y) = \exp (X+Y +\tfrac{1}{2}[X,Y])$, one can verify that $\exp(X_2) = \exp(X_1) \, g_1\cdots g_6$.
We may assume that $|X_1|=| X_2|=1$.
Since $X_1$ and $X_2$ are linearly independent and the norm is strictly convex, the distance 
\[
D\defeq d( e, \exp (X_2-X_1))=\abs{X_2-X_1}
\]
is strictly smaller than 2.
By left-invariance of the distance and the triangle inequality, we get the upper bound
\[ 
d(\exp(X_1),\exp(X_2)) = d(e,g_1\cdots g_6) \leq \sum_{j=1}^6 d(e, g_j),
\]
which we can explicitly calculate as
\begin{align*}
\sum_{j=1}^6 d(e, g_j)
&= (1-\eps) + \eps D +(\tfrac{1}{2} - \eps) +\eps^2 +\tfrac{1}{2} +\eps^2\\
&= 2- (2-D) \eps + 2 \eps^2.
\end{align*}
Since $\lowestordercoeffcondition$, taking small enough $\eps>0$ we deduce $d( \exp (X_1), \exp (X_2))<2$.
Hence the corner from $ \exp (X_1)$ to $ \exp (X_2)$ is not length minimizing in $G$.
\end{proof}

The geometric interpretation of the next lemma is the following.
Curves from a quotient group can be isometrically lifted. 
Thus in our inductive argument we can use a geodesic from the previous step to get a curve that is shorter than the corner and has an error only in the last layer.

\begin{lemma}
\label{lem:lift:induction}
Let $G$ be a Carnot group of step $s$.
Assume that there are no minimizing corners in any Carnot group of step $s-1$ with first layer isometric to the first layer of $G$.
For all linearly independent $X_1, X_2\in V_1$ there exists $h\in \exp(V_s)$ such that 
\[
d( h \exp (X_1), \exp (X_2))<|X_1|+| X_2|.
\]
\end{lemma}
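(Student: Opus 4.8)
The plan is to reduce the step by one via a quotient, apply the hypothesis there, and then lift the resulting improving curve. First I would pass to the quotient by the top layer. Since $V_s$ is the last layer of the stratification, it is central in $\calg$ (because $[\calg,V_s]\subseteq \calglayer{s+1}=\{0\}$), so $\exp(V_s)$ is a closed normal subgroup and $\bar G \defeq \leftquotient{}{}$—more precisely $G/\exp(V_s)$—is a simply connected Lie group. Writing $\pi\colon G\to\bar G$ for the projection, its algebra $\bar\calg=\calg/V_s$ inherits the stratification $\pi_*(V_1)\oplus\dots\oplus\pi_*(V_{s-1})$, which makes $\bar G$ a Carnot group of step $s-1$. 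The point I would stress is that $\pi_*$ restricted to $V_1$ is injective, since the layers are disjoint and hence $V_1\cap V_s=\{0\}$; thus $\pi_*$ maps $V_1$ isomorphically onto the first layer $\bar V_1$ of $\bar G$, and endowing $\bar V_1$ with the pushforward norm makes the two first layers isometric.

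Next I would apply the standing hypothesis. Set $\bar X_i \defeq \pi_*(X_i)$. These are linearly independent in $\bar V_1$ (because $\pi_*\vert_{V_1}$ is injective) and satisfy $\abs{\bar X_i}=\abs{X_i}$. As $\bar G$ is a step-$(s-1)$ Carnot group with first layer isometric to that of $G$, the hypothesis tells us that the corner from $\exp(\bar X_1)$ to $\exp(\bar X_2)$ is not length minimizing in $\bar G$. By the definition of a corner this curve has length $\abs{\bar X_1}+\abs{\bar X_2}=\abs{X_1}+\abs{X_2}$, so non-minimality yields
\[
d_{\bar G}\bigl(\exp(\bar X_1),\exp(\bar X_2)\bigr)<\abs{X_1}+\abs{X_2}.
\]
In particular there is a horizontal curve $\bar\sigma$ in $\bar G$ from $\exp(\bar X_1)$ to $\exp(\bar X_2)$ of length strictly less than $\abs{X_1}+\abs{X_2}$.

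Finally I would lift $\bar\sigma$ and correct the endpoint. Because $\pi$ is a Lie-group submersion carrying $V_1$ isometrically onto $\bar V_1$, the curve $\bar\sigma$ admits a horizontal lift $\sigma$ in $G$ of the same length with any prescribed starting point; I would start it at $\exp(X_1)$. Its endpoint $\sigma(1)$ then lies in the fiber $\pi^{-1}(\exp(\bar X_2))=\exp(V_s)\exp(X_2)$, so $\sigma(1)=\exp(W)\exp(X_2)$ for some $W\in V_s$. Consequently $d(\exp(X_1),\exp(W)\exp(X_2))\le L(\sigma)<\abs{X_1}+\abs{X_2}$; left-translating by the central element $\exp(-W)$ and setting $h\defeq \exp(-W)\in\exp(V_s)$ gives exactly $d(h\exp(X_1),\exp(X_2))<\abs{X_1}+\abs{X_2}$.

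The main obstacle I expect is the isometric lifting step: one must check that a horizontal lift exists along the whole of $\bar\sigma$ (a continuation argument for the horizontal ODE), that it preserves length (using that $\pi_*$ is a norm-preserving isomorphism of first layers), and that the fiber of $\pi$ is a single coset of $\exp(V_s)$, so that the entire endpoint discrepancy is absorbed into one central element $h$ of degree $s$. The remaining ingredients—centrality of $V_s$ and left-invariance of $d$—are routine.
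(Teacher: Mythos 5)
Your proposal is correct and follows essentially the same route as the paper: quotient by the central subgroup $\exp(V_s)$, observe that the first layers are isometric, apply the hypothesis to the projected corner, and return to $G$. The only cosmetic difference is that the paper packages your horizontal-lifting step into the standard identity $d(\pi(x),\pi(y))=\inf_{h\in \exp(V_s)}d(h\,x,y)$, which is justified by exactly the lifting argument you outline.
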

\begin{proof}
Consider the closed central subgroup $H\defeq\exp(V_s)$. The quotient $\lquot$ is a Carnot group of step $s-1$ with first layer $\pi_*(V_1)$.
Note that the norm on $\pi_*(V_1)$ is exactly the one that makes the projection $\pi_*:V_1\to \pi_*(V_1)$ an isometry. Therefore the first layer $\pi_*(V_1)$ of $\lquot$ is isometric to $V_1$, so by assumption there are no minimizing corners in $\lquot$.

If $X_1$ and $X_2$
are linearly independent, then so are $\pi_*(X_1)$ and $\pi_*(X_2)$.
Thus, by assumption, the corner in $\lquot$ from $\exp(\pi_*(X_1))$ to $\exp( \pi_*(X_2))$ is not length minimizing. Observe that since $\pi$ is a Lie group homomorphism, we have
$\exp(\pi_*(X))= \pi(\exp(X))$.
Hence, 
\[
d( \pi( \exp (X_1)), \pi( \exp (X_2)))<|X_1|+| X_2|.
\]
Using left-invariance of the distance on $G$ we see that
\begin{align*}
d( \pi( \exp (X_1)), \pi( \exp (X_2)))&= d( H\exp (X_1), H \exp (X_2))\\
&= \inf_{h\in H}d( h \exp (X_1), \exp (X_2)).
\end{align*}
Combining the above equality with the previous inequality, we conclude that there exists a point $h\in H$ for which the statement of the lemma holds.
\end{proof}

The next lemma is the technical core of our argument.
It shows that any error coming from Lemma~\ref{lem:lift:induction} can be corrected
using vectors in the layer $s-1$.
It also quantifies 
how the corrections change when scaling the error.
In what follows, we consider the conjugation map
$\conjg{p}(q) = pqp^{-1}$.

\begin{lemma}\label{lem:technic}
	Let $\cgp$ be a Carnot group of step $\step\geq 3$ and let $X_1$ and $X_2$ be vectors spanning $\calglayer{1}$. Then for any $\verterr\in \exp(\calglayer{s})$ there exist vectors $\perturbvec_1,\perturbvec_2,\perturbvec_3\in\calglayer{s-1}$ such that 
	\[ 
	\expconjg{X_1}{\epsilon^s\perturbvec_1}\conjgdot \expconjg{\frac{1}{2}X_2}{\epsilon^s\perturbvec_2}\conjgdot \expconjg{X_2}{\epsilon^s\perturbvec_3} = \dil{\epsilon}(\verterr),
	\]
	for all $\epsilon>0$.
\end{lemma}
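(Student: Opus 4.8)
The plan is to reduce the claimed group identity to a pair of $\epsilon$-independent linear equations in the top two layers of $\calg$ and to solve them using the stratification. Write $\verterr = \exp(\errveccomp)$ with $\errveccomp\in\calglayer{s}$. First I would compute each conjugated factor separately. Since $\conjg{\exp(X)}(\exp(\errvec)) = \exp(\Ad_{\exp(X)}\errvec) = \exp(e^{\ad_X}\errvec)$, and since each $\perturbvec_i\in\calglayer{s-1}$ while the three conjugating vectors $X_1,\tfrac12 X_2,X_2$ lie in $\calglayer{1}$, we have $\ad_X(\perturbvec_i)\in\calglayer{s}$ and $\ad_X^2(\perturbvec_i)\in\calglayer{s+1}=\{0\}$. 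Hence $e^{\ad_X}$ acts as $\mathrm{id}+\ad_X$ on each $\epsilon^s\perturbvec_i$, so that every factor is $\exp$ of an element of $\calglayer{s-1}\oplus\calglayer{s}$, with the middle factor producing a coefficient $\tfrac12$ in front of its bracket.

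Next I would multiply the three factors. The key structural observation is that for $\step\geq 3$ the subspace $\calglayer{s-1}\oplus\calglayer{s}$ is an abelian subalgebra, since all of its brackets land in layers $\calglayer{j}$ with $j\geq 2\step-2>\step$ and therefore vanish. Consequently the Baker-Campbell-Hausdorff formula degenerates to addition on these factors, and the product is simply $\exp$ of the sum of the three arguments. Comparing with $\dil{\epsilon}(\verterr)=\exp(\epsilon^s\errveccomp)$ and cancelling the common scalar $\epsilon^s$, the identity holds for all $\eps>0$ precisely when the system
\[
\perturbvec_1+\perturbvec_2+\perturbvec_3 = 0, \qquad \brkt{X_1}{\perturbvec_1}+\tfrac12\brkt{X_2}{\perturbvec_2}+\brkt{X_2}{\perturbvec_3} = \errveccomp
\]
is satisfied, the first equation living in $\calglayer{s-1}$ and the second in $\calglayer{s}$. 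Note that all dependence on $\eps$ has disappeared, which is exactly why a single choice of $\perturbvec_1,\perturbvec_2,\perturbvec_3$ works simultaneously for every $\eps$.

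Finally I would solve this system. Eliminating $\perturbvec_3=-\perturbvec_1-\perturbvec_2$ reduces everything to the single equation $\brkt{X_1-X_2}{\perturbvec_1}-\tfrac12\brkt{X_2}{\perturbvec_2}=\errveccomp$ in $\calglayer{s}$. Here the main point enters: by the stratification $\calglayer{s}=\brkt{\calglayer{1}}{\calglayer{s-1}}$, and since $X_1,X_2$ span $\calglayer{1}$, so do the linearly independent vectors $X_1-X_2$ and $X_2$; therefore the map $(\perturbvec_1,\perturbvec_2)\mapsto \brkt{X_1-X_2}{\perturbvec_1}-\tfrac12\brkt{X_2}{\perturbvec_2}$ has image $\brkt{X_1-X_2}{\calglayer{s-1}}+\brkt{X_2}{\calglayer{s-1}}=\brkt{\calglayer{1}}{\calglayer{s-1}}=\calglayer{s}$, hence is surjective. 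Thus $\errveccomp$ is attained, yielding the required $\perturbvec_1,\perturbvec_2$ and hence $\perturbvec_3$. I expect this last surjectivity to be the only genuinely substantive step, and it is precisely where the stratification (rather than mere nilpotency) is essential; the remainder is bookkeeping of brackets confined to the top two layers.
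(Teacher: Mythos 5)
Your proof is correct and follows essentially the same route as the paper's: expand each conjugation as $\exp(Y+[X,Y])$, use commutativity of $\exp(V_{s-1}\oplus V_s)$ for $s\ge 3$ to reduce the identity to an $\epsilon$-independent linear system split between the $V_{s-1}$- and $V_s$-components, and invoke $V_s=[V_1,V_{s-1}]$ together with the fact that $X_1,X_2$ span $V_1$. The only cosmetic differences are that you establish solvability by an abstract surjectivity argument after eliminating $Y_3$, where the paper decomposes $Z=[X_1,W_1]+[X_2,W_2]$ and writes the solution explicitly, and that you obtain uniformity in $\epsilon$ by cancelling $\epsilon^s$ up front rather than by linearity of the solution map.
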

\begin{proof}
Consider first for some $\errvec\in \calglayer{s}$ the equation
\begin{equation}\label{eq:triconjugate:lemma:triconjugate}
\expconjg{X_1}{\perturbvec_1}\conjgdot\expconjg{\frac{1}{2}X_2}{\perturbvec_2}\conjgdot\expconjg{X_2}{\perturbvec_3}=\exp(Z)
\end{equation}
in the variables $\perturbvec_1,\perturbvec_2,\perturbvec_3\in\calglayer{\step-1}$.
Since the step of the group $\cgp$ is $\step$, each conjugation can be expanded by the Baker-Campbell-Hausdorff 
Formula\footnote{Alternatively, one can use the formula \cite[page 114]{Warner}
\[ \expconjg{X}{Y} = \exp(\Ad_{\exp(X)}Y) = \exp(e^{\ad_X}Y). \]}
as 
\[ 
\expconjg{X}{Y} = \exp(X)\exp(Y)\exp(-X) = \exp(Y+\brkt{X}{Y}).
\]
We remark that the subgroup $\exp(\calglayer{s-1}\oplus\calglayer{s})$, containing the above conjugations, is commutative because of the assumption $\step\geq 3$. Hence $\exp$ is a homomorphism on $\calglayer{s-1}\oplus\calglayer{s}$. Consequently, since $\exp$ is also injective, we see that~\eqref{eq:triconjugate:lemma:triconjugate}
 is equivalent to the linear equation
\begin{equation}\label{eq:triconjugate lemma:linear}
\perturbvec_1+\perturbvec_2+\perturbvec_3+\brkt{X_1}{\perturbvec_1}+\brkt{X_2}{\tfrac{1}{2}\perturbvec_2+\perturbvec_3} = Z.
\end{equation}
Since the vectors $X_1$ and $X_2$ span the first layer $\calglayer{1}$, and $\calglayer{s}=\brkt{\calglayer{1}}{\calglayer{s-1}}$, for any $Z\in V_s$ there exist $\errveccomp_1,\errveccomp_2\in\calglayer{s-1}$ such that
\[ 
\errvec=\brkt{X_1}{\errveccomp_1}+\brkt{X_2}{\errveccomp_2}.
\]
Therefore, to solve the linear equation~\eqref{eq:triconjugate lemma:linear}, it is sufficient to solve the linear system
\begin{align*}
\perturbvec_1+\perturbvec_2+\perturbvec_3&=0\\
\perturbvec_1&=\errveccomp_1\\
\smash{\tfrac{1}{2}}\perturbvec_2+\perturbvec_3&=\errveccomp_2,
\end{align*}
which has the solution $
\perturbvec_1=\errveccomp_1,
\perturbvec_2=-2\errveccomp_1-2\errveccomp_2, \perturbvec_3=\errveccomp_1+2\errveccomp_2$.
Hence for any data $\errvec\in\calglayer{s}$, equation~\eqref{eq:triconjugate:lemma:triconjugate} has a solution $\perturbvec_1,\perturbvec_2,\perturbvec_3\in\calglayer{s-1}$.

Consider a fixed $\verterr\in\exp(\calglayer{s})$ and let $\errvec\in\calglayer{s}$ be such that $\exp(\errvec)=\verterr$. Note that then $\dil{\epsilon}(\verterr)=\exp(\epsilon^s\errvec)$ for any $\epsilon>0$. Recalling that the solution $\perturbvec_1,\perturbvec_2,\perturbvec_3$ for the data $\errvec$ is given by a linear equation, we have that for any $\epsilon>0$ the vectors $\epsilon^s\perturbvec_1,\epsilon^s\perturbvec_2,\epsilon^s\perturbvec_3$ give a solution for the data $\epsilon^s\errvec$, resulting in the statement of the lemma.
\end{proof}
\section{The main result}
\subsection{Reduction to Carnot groups}
The proof of Theorem~\ref{thm:mflds} can be reduced to the corresponding result for Carnot groups. Due to the possibility of the manifold not being equiregular (see \cite{jeancontrol} for the definition), we first consider a desingularization of the manifold near the corner-type singularity. Then we perform a blow-up, giving a corner in the metric tangent, which is a Carnot group by Mitchell's Theorem.

Let $M$ be a \subriemannian manifold with subbundle $\Delta$, and let $\gamma$ be a curve in $M$.
Fix a local orthonormal frame $X_1,\dots,X_r$ for $\Delta$ near $\gamma(0)$. By \cite[Lemma 2.5, page 49]{jeancontrol} there exists an equiregular \subriemannian manifold
$N$ with an orthonormal frame $\xi_1,\dots,\xi_r$ and a map $\pi: N \to M$ onto a neighborhood of $\gamma(0)$ such that $\pi_*\xi_i=X_i$.
We observe that $\pi $ is 1-Lipschitz with respect to the \subriemannian distances.

Assume that $\gamma$ is length minimizing, has a corner-type singularity at $0$, and is contained in $\pi(N)$. Let $u_j$ be integrable functions such that $\dot\gamma = \sum_j u_jX_j$ almost everywhere.

Let $\sigma$ be a curve in $N$ such that $\dot{\sigma} = \sum_j u_j\xi_j$ almost everywhere. Hence $\pi \circ \sigma ={\gamma} $ and the two curves $\sigma $ and ${\gamma} $ have the same length, see the proof of \cite[Lemma 2.5, page 49]{jeancontrol}. Since $\pi$ does not stretch distances, we conclude that $\sigma$ is length minimizing.
 
Since the vector fields $X_j$ form a frame, the coefficients $u_j$ are uniquely determined from $\dot{\gamma}$, and the existence of the left and right derivatives at 0 is equivalent to 0 being a left and right Lebesgue point for $u_j$. Therefore $\sigma$ also admits\footnote{We remark that for rank-varying distributions, desingularizations of curves with corner-type singularities need not have one-sided derivatives.} left and right derivatives at 0. Noting that $\pi_*\dot{\sigma}=\dot{\gamma}$ and that $\gamma$ has a corner-type singularity at $0$, we conclude that $\sigma$ also has a corner-type singularity at 0.

The curve $\sigma$ is now a length-minimizing curve with a corner-type singularity on an equiregular \subriemannian manifold $N$. The metric tangent of $N$ is a Carnot group $\cgp$, see a detailed proof in \cite[Proposition 2.4, page 39]{jeancontrol}. The blow-up of $\sigma$ on the Carnot group $\cgp$ is length minimizing and is given by the concatenation of two half-lines, see \cite[Proposition 2.4]{Leonardi-Monti}.

\subsection{The inductive non-minimality argument}

By the previous argument, to show that a length-minimizing curve in a \subriemannian manifold cannot have a corner-type singularity, it suffices to prove the corresponding result for Carnot groups. In fact, we prove the slightly stronger statement:
\begin{theorem}\label{thm:Carnot}
Corners are not length minimizing in any Carnot group equipped with a Carnot-Carath\'eodory distance coming from a strictly convex norm.
\end{theorem}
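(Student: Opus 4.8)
The plan is to prove Theorem~\ref{thm:Carnot} by induction on the step $s$ of the Carnot group, using the three preliminary lemmas as the engine of the argument. The base case $s=2$ is exactly Lemma~\ref{lem:starting:induction}, so I would assume $s\geq 3$ and that the result already holds in every Carnot group of step $s-1$ whose first layer is isometric to $\calglayer{1}$ of $\cgp$. Fixing a corner from $\exp(X_1)$ to $\exp(X_2)$ with $X_1,X_2$ spanning $\calglayer{1}$ (we may take $\abs{X_1}=\abs{X_2}=1$), the goal is to construct a strictly shorter competitor curve with the same endpoints, so that $d(\exp(X_1),\exp(X_2))<2$.

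The construction proceeds in three movements. First, Lemma~\ref{lem:lift:induction} gives an element $\verterr\in\exp(\calglayer{s})$ and a curve that joins $\verterr\exp(X_1)$ to $\exp(X_2)$ with length strictly less than $\abs{X_1}+\abs{X_2}=2$; this is a genuine shortcut across the corner, except that its left endpoint is displaced from $\exp(X_1)$ by the degree-$s$ error $\verterr$. Second, I would rescale this shortcut by the dilation $\dil{\eps}$ so that it operates only at small scales near the vertex, and then correct the resulting error $\dil{\eps}(\verterr)$ using the three conjugated curves supplied by Lemma~\ref{lem:technic}, whose correcting vectors $\perturbvec_1,\perturbvec_2,\perturbvec_3$ lie in $\calglayer{s-1}$. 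The key point extracted from Lemma~\ref{lem:technic} is the scaling: the error $\dil{\eps}(\verterr)=\exp(\eps^s\errvec)$ is matched by correcting vectors of size $\eps^s$, but because these vectors live in layer $s-1$, the \emph{lengths} of the correcting curves scale like $\eps^{s/(s-1)}$ rather than $\eps^{s}$. The final step is the length accounting: the dilated shortcut saves a definite amount of length proportional to $\eps$, while the three corrections cost order $\eps^{s/(s-1)}$, giving a total length of the competitor of the form $2 - a\eps + b\eps^{s/(s-1)}$ with $a,b>0$. Since $s/(s-1)>1$ for $s\geq 3$, the linear saving dominates for small $\eps$, and the competitor is strictly shorter than the corner.

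The main obstacle, and the part requiring genuine care rather than bookkeeping, is assembling the dilated shortcut and the three correcting conjugations into a single admissible (horizontal) curve with the \emph{exact} endpoints $\exp(X_1)$ and $\exp(X_2)$, and verifying that the group-multiplication algebra closes up correctly. Concretely, one must check that composing the corrections at the three prescribed basepoints $\exp(X_1)$, $\exp(\tfrac12 X_2)$, $\exp(X_2)$ (the conjugation basepoints appearing in Lemma~\ref{lem:technic}) precisely cancels the displacement $\dil{\eps}(\verterr)$ introduced by the lifted shortcut, so that the pieces concatenate into a closed competitor from $\exp(X_1)$ to $\exp(X_2)$. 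Once this algebraic closure is confirmed, the length estimate is the routine computation sketched above, and letting $\eps\to 0$ finishes the induction.
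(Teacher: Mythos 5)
Your proposal follows the paper's proof essentially step for step: induction on the step with Lemma~\ref{lem:starting:induction} as the base case, Lemma~\ref{lem:lift:induction} to produce a shortcut whose left endpoint is off by a degree-$s$ error $h$, Lemma~\ref{lem:technic} to cancel $\dil{\eps}(h)$ with three conjugated corrections based at $\exp(X_1)$, $\exp(\tfrac12 X_2)$, $\exp(X_2)$, and the length budget $2-(2-D)\eps+O(\eps^{s/(s-1)})$. There is, however, one step you assert without justification and which does not hold in general: that the corner directions $X_1,X_2$ span $\calglayer{1}$. The theorem concerns arbitrary Carnot groups, whose rank may exceed $2$, and Lemma~\ref{lem:technic} genuinely needs the spanning hypothesis --- it is what guarantees $\brkt{X_1}{\calglayer{s-1}}+\brkt{X_2}{\calglayer{s-1}}=\calglayer{s}$, i.e., that every error $Z\in\calglayer{s}$ is correctable by vectors in $\calglayer{s-1}$. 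The paper handles this by first reducing to the rank-2 Carnot subgroup generated by $X_1$ and $X_2$: a curve that is length minimizing in $G$ is a fortiori length minimizing in any subgroup containing it, so it suffices to prove non-minimality there, and the induction is then run over rank-2 groups of increasing step. You should add this reduction. The other item you defer --- verifying that the dilated shortcut and the three corrections concatenate into a path from $\exp(X_1)$ to $\exp(X_2)$ --- is indeed only bookkeeping: writing out the product of the seven pieces, the centrality of $\dil{\eps}(h)$ lets you pull it to the front and regroup the remaining factors into exactly the three conjugations of Lemma~\ref{lem:technic}, so the displacement cancels as you predict.
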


In the above, the distance is only coming from a strictly convex norm, as opposed to an inner product as in the \subriemannian case. The argument at the beginning of this section is however not dependent on the chosen distance. Thus it shows that Theorem~\ref{thm:mflds} also holds for C-C spaces with strictly convex norms.

\begin{proof}[Proof of Theorem~\ref{thm:Carnot}]
We remark that it  suffices to consider the case of rank-2 Carnot groups. Indeed, any corner is contained in some rank-2 subgroup, and if a curve is length minimizing, it must also be length minimizing in any subgroup containing it.
The theorem will then be proven for rank-2 Carnot groups by induction on the step $s$ of the group.
The base of induction is the case $s=2$, where the result is verified by Lemma~\ref{lem:starting:induction}.

Let $G$ be a rank-2 Carnot group of step $s$ with a Carnot-Carath\'eodory distance coming from a strictly convex norm. Consider the corner from $\exp(X_1)$ to $\exp(X_2)$, for some linearly independent $X_1,X_2\in\calglayer{1}$ with $\abs{X_1}=\abs{X_2}=1$.

Taking the quotient of $G$ by the central subgroup $\exp(\calglayer{s})$, we get a Carnot group of step $s-1$ whose first layer is isometric to the first layer of $G$. Note that the projection of a corner is still a corner in the quotient, where by induction we assume that corners are not length minimizing. Hence, by Lemma~\ref{lem:lift:induction}, there exists $h\in \exp(\calglayer{s})$ such that 
\begin{equation}\label{eq:thmcarnot:geodesiclift}
d( h \exp (X_1), \exp (X_2))<2.
\end{equation}
By Lemma~\ref{lem:technic}, for this fixed $h\in\exp(\calglayer{s})$, there exist vectors $Y_1, Y_2, Y_3\in\calglayer{s-1}$ satisfying the equation
\begin{equation}\label{eq:thmcarnot:triconjugate}
\dil{\epsilon}(h)^{-1}\expconjg{X_1}{\epsilon^sY_1}\conjgdot \expconjg{\frac{1}{2}X_2}{\epsilon^sY_2}\conjgdot \expconjg{X_2}{\epsilon^sY_3} = e.
\end{equation}
For a given $\eps>0$, consider the following points
\begin{align*}
g_1 &\defeq \exp(\epsilon^s\perturbvec_1) = \dil{\epsilon^{s/(s-1)}}(\exp(\perturbvec_1)),\\
g_2 &\defeq \exp(-(1-\epsilon)X_1)=\dil{1-\eps}(\exp(-X_1)),\\
g_3 &\defeq \exp(-\epsilon X_1) \dil{\epsilon}(\verterr)^{-1} \exp(\epsilon X_2)=\dil{\epsilon}\left(\exp(- X_1)\verterr^{-1}\exp(X_2) \right),\\
g_4 &\defeq \exp((\tfrac{1}{2}-\epsilon)X_2)=\dil{\tfrac{1}{2}-\epsilon}(\exp(X_2)),\\
g_5 &\defeq \exp(\epsilon^s\perturbvec_2) = \dil{\epsilon^{s/(s-1)}}(\exp(\perturbvec_2)),\\
g_6 &\defeq \exp(\tfrac{1}{2}X_2)=\dil{\tfrac{1}{2}}(\exp(X_2)),\quad\text{and}\\
g_7 &\defeq\exp(\epsilon^s\perturbvec_3) = \dil{\epsilon^{s/(s-1)}}(\exp(\perturbvec_3)).
\end{align*}
We claim that
\begin{equation}\label{eq:thmcarnot:endpt}
\exp(X_2) = \exp(X_1) \, g_1\cdots g_7,
\end{equation}
and that for small enough $\epsilon>0$
\begin{equation}\label{eq:thmcarnot:pointdistance}
\sum_{j=1}^7 d(e, g_j) <2,
\end{equation}
from which the result of the theorem will follow.
Regarding \eqref{eq:thmcarnot:endpt}, writing explicitly the definitions of the points $g_j$, we have
\begin{align*}
\exp(X_1) \, g_1\cdots g_7
= \exp(X_1)
&\exp(\epsilon^s\perturbvec_1) 
\exp(-(1-\eps)X_1)
\exp(-\epsilon X_1) \dil{\epsilon}(\verterr)^{-1}\\
\cdot&\exp(\epsilon X_2)
\exp((\tfrac{1}{2}-\epsilon)X_2)
\exp(\epsilon^s\perturbvec_2) 
\exp(\tfrac{1}{2}X_2)
\exp(\epsilon^s\perturbvec_3).
\end{align*}
Then, using the fact that $h$ is in $Z(G)$, we rewrite the right-hand side in terms of conjugations as
\[
\delta_\eps (h)^{-1} 
\expconjg{X_1}{\epsilon^sY_1}\conjgdot \expconjg{\frac{1}{2}X_2}{\epsilon^sY_2}\conjgdot \expconjg{X_2}{\epsilon^sY_3}\exp(X_2).
\]
Since 
$Y_1, Y_2, Y_3$ were chosen to satisfy~\eqref{eq:thmcarnot:triconjugate}, the above term reduces to $\exp(X_2)$, thus showing~\eqref{eq:thmcarnot:endpt}.
To show~\eqref{eq:thmcarnot:pointdistance}, we note that as the points $g_j$ are all dilations of some fixed points, the individual distances are given by
\begin{align*}
d(e,g_1) &= \epsilon^{s/(s-1)} d(e,\exp(\perturbvec_1)),\\
d(e,g_2) &= 1-\epsilon\\
d(e,g_3) &= \epsilon d(e, \exp(- X_1)h^{-1}\exp(X_2)) = \epsilon d(h\exp(X_1), \exp(X_2)),\\
d(e,g_4) &= \frac{1}{2}-\epsilon,\\
d(e,g_5) &= \epsilon^{s/(s-1)} d(e,\exp(\perturbvec_2)),\\
d(e,g_6) &= \frac{1}{2}\quad\text{and}\\
d(e,g_7) &= \epsilon^{s/(s-1)} d(e,\exp(\perturbvec_3)).
\end{align*}
Summing all the above distances, we get
\[
\sum_{j=1}^7 d(e, g_j) 
= 2 - (2-D) \epsilon +o(\epsilon),\quad\text{as }\epsilon\to 0,
\]
where 
\[D\defeq d( h \exp (X_1), \exp (X_2)).\]
By the choice of $h$ from~\eqref{eq:thmcarnot:geodesiclift},
we have $\lowestordercoeffcondition$.
Therefore, for small enough $\eps>0$, we deduce~\eqref{eq:thmcarnot:pointdistance}.

We finally estimate using left-invariance, equations~\eqref{eq:thmcarnot:endpt} and~\eqref{eq:thmcarnot:pointdistance}, and the triangle inequality, that
\[
d(\exp(X_1), \exp(X_2))
=d( e, g_1\cdots g_7)
\leq \sum_{i=1}^7 d(e, g_i)<2,
\]
for small enough $\epsilon>0$.
Since the considered corner from $\exp(X_1)$ to $\exp(X_2)$ has length equal to 2, where $X_1$ and $X_2$ were arbitrary linearly independent unit-norm vectors of the first layer $\calglayer{1}$, we conclude that corners in the group $G$ of step $s$ are not length minimizing.
\end{proof}

\subsubsection{Acknowledgement}
The authors thank A.~Ottazzi, D.~Vittone, and the anonymous referees for their helpful remarks.
E.L.D. acknowledges the support of the Academy of Finland project no. 288501. 

\medskip 

A S\'eminaire Bourbaki presentation including the content of this paper and related work has been subsequently given by L.~Rifford  \cite{Rifford:Bourbaki}.

\bibliography{general_bibliography-corners}

\def\cprime{$'$} \def\cprime{$'$} \def\cprime{$'$} \def\cprime{$'$}
  \def\cprime{$'$}
\providecommand{\bysame}{\leavevmode\hbox to3em{\hrulefill}\thinspace}
\providecommand{\MR}{\relax\ifhmode\unskip\space\fi MR }
\providecommand{\MRhref}[2]{%
  \href{http://www.ams.org/mathscinet-getitem?mr=#1}{#2}
}
\providecommand{\href}[2]{#2}
\begin{thebibliography}{LDLMV15}

\bibitem[ABB15]{Agrachev_Barilari_Boscain:book}
Andrei Agrachev, Davide Barilari, and Ugo Boscain, \emph{Introduction to
  {R}iemannian and {S}ub-{R}iemannian geometry}, Manuscript (2015).

\bibitem[Agr14]{Agrachev_problems}
Andrei~A. Agrachev, \emph{Some open problems}, Geometric control theory and
  sub-{R}iemannian geometry, Springer INdAM Ser., vol.~5, Springer, Cham, 2014,
  pp.~1--13.

\bibitem[AS04]{Agrachev_Sachkov}
Andrei~A. Agrachev and Yuri~L. Sachkov, \emph{Control theory from the geometric
  viewpoint}, Encyclopaedia of Mathematical Sciences, vol.~87, Springer-Verlag,
  Berlin, 2004, Control Theory and Optimization, II.

\bibitem[BH93]{BryantHsu}
Robert~L. Bryant and Lucas Hsu, \emph{Rigidity of integral curves of rank {$2$}
  distributions}, Invent. Math. \textbf{114} (1993), no.~2, 435--461.

\bibitem[GK95]{Gole_Karidi}
Chr. Gol{\'e} and R.~Karidi, \emph{A note on {C}arnot geodesics in nilpotent
  {L}ie groups}, J. Dynam. Control Systems \textbf{1} (1995), no.~4, 535--549.

\bibitem[Gro96]{Gromov1}
Mikhail Gromov, \emph{{C}arnot-{C}arath\'eodory spaces seen from within},
  Sub-Riemannian geometry, Progr. Math., vol. 144, Birkh\"auser, Basel, 1996,
  pp.~79--323.

\bibitem[Gro99]{Gromov}
\bysame, \emph{Metric structures for {R}iemannian and non-{R}iemannian spaces},
  Progress in Mathematics, vol. 152, Birkh\"auser Boston Inc., Boston, MA,
  1999, Based on the 1981 French original, With appendices by M.\ Katz, P.\
  Pansu and S.\ Semmes, Translated from the French version by Sean Michael
  Bates.

\bibitem[Ham90]{Hamenstadt}
Ursula Hamenst{\"a}dt, \emph{Some regularity theorems for
  {C}arnot-{C}arath\'eodory metrics}, J. Differential Geom. \textbf{32} (1990),
  no.~3, 819--850.

\bibitem[Jea14]{jeancontrol}
Fr{\'e}d{\'e}ric Jean, \emph{Control of nonholonomic systems: from
  sub-{R}iemannian geometry to motion planning}, Springer Briefs in
  Mathematics, Springer, Cham, 2014.

\bibitem[LD15]{LeDonne:Carnot}
Enrico Le~Donne, \emph{A primer of {C}arnot groups}, Manuscript (2015).

\bibitem[LDLMV13]{LLMV}
Enrico Le~Donne, Gian~Paolo Leonardi, Roberto Monti, and Davide Vittone,
  \emph{Extremal curves in nilpotent {L}ie groups}, Geom. Funct. Anal.
  \textbf{23} (2013), no.~4, 1371--1401.

\bibitem[LDLMV14]{LLMV2}
Enrico Le~Donne, Gian~Paolo Leonardi, Roberto Monti, and Davide Vittone,
  \emph{Extremal polynomials in stratified groups}, Preprint, submitted (2014).

\bibitem[LDLMV15]{LLMV_corner}
Enrico Le~Donne, Gian~Paolo Leonardi, Roberto Monti, and Davide Vittone,
  \emph{Corners in non-equiregular sub-{R}iemannian manifolds}, ESAIM Control
  Optim. Calc. Var. \textbf{21} (2015), no.~3, 625--634.

\bibitem[LM08]{Leonardi-Monti}
Gian~Paolo Leonardi and Roberto Monti, \emph{End-point equations and regularity
  of sub-{R}iemannian geodesics}, Geom. Funct. Anal. \textbf{18} (2008), no.~2,
  552--582.

\bibitem[LS94]{Liu-Sussman2}
Wensheng Liu and H{\'e}ctor~J. Sussman, \emph{Abnormal sub-{R}iemannian
  minimizers}, Differential equations, dynamical systems, and control science
  \textbf{152} (1994), xl+946, A Festschrift in honor of Lawrence Markus.

\bibitem[LS95]{Liu-Sussman}
\bysame, \emph{Shortest paths for sub-{R}iemannian metrics on rank-two
  distributions}, Mem. Amer. Math. Soc. \textbf{118} (1995), no.~564, x+104.

\bibitem[Mon94]{Montgomery1994}
Richard Montgomery, \emph{Abnormal minimizers}, SIAM J. Control Optim.
  \textbf{32} (1994), no.~6, 1605--1620.

\bibitem[Mon02]{Montgomery}
\bysame, \emph{A tour of subriemannian geometries, their geodesics and
  applications}, Mathematical Surveys and Monographs, vol.~91, American
  Mathematical Society, Providence, RI, 2002.

\bibitem[Mon14a]{Monti-nonminimizing}
Roberto Monti, \emph{A family of nonminimizing abnormal curves}, Ann. Mat. Pura
  Appl. (4) \textbf{193} (2014), no.~6, 1577--1593.

\bibitem[Mon14b]{Monti_survey}
\bysame, \emph{The regularity problem for sub-{R}iemannian geodesics},
  Geometric control theory and sub-{R}iemannian geometry, Springer INdAM Ser.,
  vol.~5, Springer, Cham, 2014, pp.~313--332.

\bibitem[Mon14c]{Monti_CV}
\bysame, \emph{Regularity results for sub-{R}iemannian geodesics}, Calc. Var.
  Partial Differential Equations \textbf{49} (2014), no.~1-2, 549--582.

\bibitem[Rif14]{Rifford:book}
Ludovic Rifford, \emph{Sub-{R}iemannian geometry and optimal transport},
  Springer Briefs in Mathematics, Springer, Cham, 2014.

\bibitem[Rif16]{Rifford:Bourbaki}
\bysame, \emph{Singuli\`eres minimisantes en g\'eom\'etrie sous-riemannienne
  [d'apr\`es {H}akavuori, {L}e {D}onne, {L}eonardi, {M}onti...]}, Ast\'erisque
  (2016), Exp. No. 1113.

\bibitem[Str86]{Strichartz}
Robert~S. Strichartz, \emph{Sub-{R}iemannian geometry}, J. Differential Geom.
  \textbf{24} (1986), no.~2, 221--263.

\bibitem[Str89]{Strichartz2}
\bysame, \emph{Corrections to: ``{S}ub-{R}iemannian geometry'' [{J}.
  {D}ifferential {G}eom.\ {\bf 24} (1986), no.\ 2, 221--263; 
  (88b:53055) ]}, J. Differential Geom. \textbf{30} (1989), no.~2, 595--596.

\bibitem[Sus96]{Sussmann_cornucopia}
H{\'e}ctor~J. Sussmann, \emph{A cornucopia of four-dimensional abnormal
  sub-{R}iemannian minimizers}, Sub-{R}iemannian geometry, Progr. Math., vol.
  144, Birkh\"auser, Basel, 1996, pp.~341--364.

\bibitem[Sus14]{Sussmann:regularity}
H.~J. Sussmann, \emph{A regularity theorem for minimizers of real-analytic
  subriemannian metrics}, Decision and Control (CDC), 2014 IEEE 53rd Annual
  Conference on, Dec 2014, pp.~4801--4806.

\bibitem[War83]{Warner}
Frank~W. Warner, \emph{Foundations of differentiable manifolds and {L}ie
  groups}, Graduate Texts in Mathematics, vol.~94, Springer-Verlag, New York,
  1983, Corrected reprint of the 1971 edition.

\end{thebibliography}
\bibliographystyle{amsalpha}
\end{document}